\newcommand{\fejer}{Fej\'er}
\newcommand{\frechet}{Fr\'echet}
\newcommand{\scal}[2]{\left\langle{#1},{#2}  \right\rangle}
\newcommand{\menge}[2]{\big\{{#1}~\big |~{#2}\big\}}
\newcommand{\Fix}{\ensuremath{\operatorname{Fix}}}
\newcommand{\ran}{\ensuremath{\operatorname{ran}}}
\newcommand{\inte}{\ensuremath{\operatorname{int}\,}}
\newcommand{\nnn}{\ensuremath{{n\in{\mathbb N}}}}
\newcommand{\RR}{\ensuremath{\mathbb R}}
\newcommand{\NN}{\ensuremath{\mathbb N}}
\newcommand{\pinf}{\ensuremath{+\infty}}
\newcommand{\RPP}{\ensuremath{\mathbb R}_{++}}
\newcommand{\RP}{\ensuremath{\mathbb R}_{+}}
\newcommand{\ball}{\ensuremath{\mathrm{ball}}}
\newcommand{\sgn}{\ensuremath{\operatorname{sgn}}}
\newcommand{\Id}{\ensuremath{\operatorname{Id}}}
\newcommand{\To}{\ensuremath{\rightrightarrows}}
\newcommand{\exi}{\ensuremath{\exists\,}}
\spnewtheorem{fact}{Fact}{\bf}{\it}
\journalname{Journal of Optimization Theory and Applications}
\begin{document}

\title{On the Finite Convergence of a Projected Cutter
Method\thanks{HHB was partially supported by a Discovery Grant
and an Accelerator Supplement of the Natural Sciences and
Engineering Research Council of Canada (NSERC) and by the Canada
Research Chair Program.  CW was partially supported by a grant
from Shanghai Municipal Commission for Science and Technology
(13ZR1455500).  XW was partially supported by a Discovery Grant
of NSERC.  JX was partially supported by NSERC grants of HHB and
XW.}} 


\author{Heinz~H.~Bauschke \and
        Caifang~Wang \and
	Xianfu~Wang \and
	Jia~Xu
}


\institute{Heinz H.\ Bauschke (corresponding author) \at
              Mathematics, University of British
Columbia, Kelowna, B.C.\ V1V~1V7, Canada\\
              \email{heinz.bauschke@ubc.ca}           
           \and
	   Caifang Wang \at
	   Department of Mathematics,
Shanghai Maritime University, China\\
           \email{cfwang@shmtu.edu.cn}           
	   \and
	   Xianfu Wang \at
              Mathematics, University of British
Columbia, Kelowna, B.C.\ V1V~1V7, Canada\\
           \email{shawn.wang@ubc.ca}           
	   \and
	   Jia Xu \at 
           Mathematics, University of British
Columbia, Kelowna, B.C.\ V1V~1V7, Canada\\
           \email{jia.xu@ubc.ca}           
}

\date{Received: date / Accepted: date}

\maketitle

\begin{abstract}
The subgradient projection iteration is a classical method for solving
a convex inequality. Motivated by works of Polyak and of Crombez,
we present and analyze a more general
method for finding a fixed point of a cutter, 
provided that the fixed point set has nonempty interior. 
Our assumptions on the parameters are more general than existing
ones.  Various limiting examples and comparisons are provided.
\keywords{Convex Function \and Cutter \and \fejer\ Monotone
Sequence \and Finite Convergence \and Quasi Firmly Nonexpansive
Mapping \and Subgradient Projector}
\subclass{90C25 \and 47H04 \and 47H05 \and 47H09 \and 65K10}
\end{abstract}

\section{Introduction}

Throughout this paper,
we assume that
\begin{equation}
\text{$X$ is a real Hilbert space}
\end{equation}
with inner product $\scal{\cdot}{\cdot}$ and induced norm
$\|\cdot\|$. 
We also assume that 
$T \colon X\to X$ is a \emph{cutter},
i.e., 
$\Fix T := \menge{y\in X}{y=Ty}\neq\varnothing$ and 
that furthermore
$(\forall x\in X)(\forall y\in \Fix T)$
$\scal{y-Tx}{x-Tx}\leq 0$;
equivalently,
\begin{equation}
(\forall x\in X)(\forall y\in \Fix T)\quad
\|Tx-y\|^2 + \|x-Tx\|^2 \leq \|x-y\|^2. 
\end{equation}
Cutters are also known as \emph{quasi firmly nonexpansive operators}.
We also assume that 
$C$ is a closed and convex subset of $X$
such that
$C \cap \Fix T \neq\varnothing$.
Our aim is to 
\begin{equation}
\label{e:0424a}
\text{find a point in $C\cap\Fix T\neq\varnothing$.}
\end{equation}
Because $T$ can be a subgradient projector 
(see Example~\ref{ex:0424b} below), 
\eqref{e:0424a} is quite flexible and includes the problem
of solving convex inequalities. 
For further information on cutters and subgradient projectors,
we refer the reader to 
\cite{bb96,MOR,BWWX1,Cegielski,CL,CS,CenZen,Comb93,Comb97,CombLuo,Pauwels,Poljak,Polyakbook,PolyakHaifa,SY,YO1,YO2,YSY,YY} 
and the references therein. 

Given $r\geq 0$, we follow Crombez \cite{Crombez} and define the operator 
$U_r\colon X\to X$ at $x\in X$ by 
\begin{equation}
\label{e:Ur}
U_rx := 
\begin{cases}
\displaystyle x + \frac{r+\|Tx-x\|}{\|Tx-x\|}(Tx-x) = Tx +
\frac{r}{\|Tx-x\|}(Tx-x), &\text{if $x\neq Tx$;}\\
x, &\text{otherwise.}
\end{cases}
\end{equation}
When $T$ is a subgradient projector,
then $U_r$ was also studied by Polyak \cite{PolyakHaifa}. 
Note that $\Fix U_r = \Fix T$. 

Our goal is to solve \eqref{e:0424a} algorithmically via
sequence $(x_n)_\nnn$ generated by $x_0\in X$ and
\begin{equation}
(\forall \nnn) 
\quad
x_{n+1} := P_{C}U_{r_n}x_n,
\end{equation}
where $P_C$ is the projector\footnote{$P_C$ is the unique
operator from $X$ to $C$ satisfying $(\forall x\in X)(\forall c\in C)$
$\|x-P_Cx\|\leq\|x-c\|$.} onto $C$ 
and the sequence of parameters $(r_n)_\nnn$ lying in $\RPP :=
\menge{\xi\in\RR}{\xi>0}$ satisfies a 
divergent-series condition.

\emph{
We will obtain \emph{finite convergence results} for this and more general
algorithms provided some constraint
qualification is satisfied. In the present setting,
our results complement and extend 
results by Crombez for cutters and by Polyak
for subgradient projectors. 
}

The paper is organized as follows.
In Section~\ref{s:aux}, we collect various
auxiliary results, that will facilitate the presentation
of the main results in Section~\ref{s:main}.
Limiting examples are presented in Section~\ref{s:limex}.
In Section~\ref{s:compare}, we compare to existing results.
Future research directions are discussed in
Section~\ref{s:persp}. Finally, Section~\ref{s:conc} concludes the
paper. 
Notation is standard and follows e.g., \cite{BC2011}. 

\section{Auxiliary Results}

\label{s:aux}

\subsection{Cutters}

We start with the most important instance of a cutter,
namely Polyak's subgradient projector \cite{Poljak}. 

\begin{example}[subgradient projector]
\label{ex:0424b}
Let $f\colon X\to\RR$ be convex and continuous such that $\menge{x\in
X}{f(x)\leq 0}\neq\varnothing$,
and let $s\colon X\to X$ be a selection of $\partial f$,
i.e., $(\forall x\in X)$ $s(x)\in\partial f(x)$. 
Then the \emph{associated subgradient projector}, 
defined by 
\begin{equation}
(\forall x\in X)\quad 
G_fx := 
\begin{cases}
\displaystyle x - \frac{f(x)}{\|s(x)\|^2}s(x), &\text{if $f(x)>0$;}\\
x, &\text{otherwise,}
\end{cases}
\end{equation}
is a cutter. 
\end{example}

We now collect some inequalities and identities that
will facilitate the proofs of the main results.
The inequality
$\|U_rx-y\|^2 \leq \|Tx-y\|^2 - r^2$,
which is a consequence of \ref{l:0424e2} in the next lemma,
was also observed by Crombez in \cite[Lemma~2.3]{Crombez}. 

\begin{lemma}
\label{l:0424e}
Let $y\in \Fix T$, let $r\in\RPP$, and
suppose that $\ball(y;r)\subseteq \Fix T$ and that
$x\in X\smallsetminus \Fix T$. 
Set
\begin{equation}
\tau_x := \scal{x-y}{(x-Tx)/\|x-Tx\|} - \big(r + \|x-Tx\|\big).
\end{equation}
Then the following hold:
\begin{enumerate}
\item 
\label{l:0424e1}
$\tau_x \geq 0$. 
\item 
\label{l:0424e2}
$\|U_rx-y\|^2 = \|Tx-y\|^2 - r^2 -2r\tau_x \leq \|Tx-y\|^2 - r^2$. 
\item 
\label{l:0424e3}
$\|U_rx-y\|^2 = \|x-y\|^2 - (r+\|x-Tx\|)^2 -2\tau_x(r+\|x-Tx\|)
\leq \|x-y\|^2 - (r+\|x-Tx\|)^2 \leq \|x-y\|^2 - r^2 - \|x-Tx\|^2$. 
\end{enumerate}
\end{lemma}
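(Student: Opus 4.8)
The statement bundles three facts about the Crombez operator $U_r$ under the assumption that a ball $\ball(y;r)$ sits inside $\Fix T$. The natural strategy is to prove \ref{l:0424e1} first (it is the geometric heart of the matter), then obtain \ref{l:0424e2} and \ref{l:0424e3} by essentially algebraic expansions of $\|U_rx-y\|^2$, using \ref{l:0424e1} to drop the nonnegative terms and get the displayed inequalities.

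First I would prove $\tau_x\ge 0$. The idea is that since $\ball(y;r)\subseteq\Fix T$, every point $y+rv$ with $\|v\|\le 1$ is a fixed point, so the cutter inequality $\scal{z-Tx}{x-Tx}\le 0$ holds with $z=y+rv$. Expanding, $\scal{y-Tx}{x-Tx}+r\scal{v}{x-Tx}\le 0$ for all $\|v\|\le 1$; choosing $v=(x-Tx)/\|x-Tx\|$ (legitimate since $x\notin\Fix T$) gives the sharpest bound $\scal{y-Tx}{x-Tx}+r\|x-Tx\|\le 0$. Now rewrite $\scal{y-Tx}{x-Tx}=\scal{y-x}{x-Tx}+\|x-Tx\|^2$, divide through by $\|x-Tx\|>0$, and rearrange: this yields exactly $\scal{x-y}{(x-Tx)/\|x-Tx\|}\ge r+\|x-Tx\|$, i.e.\ $\tau_x\ge 0$. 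I expect this to be the main (and only real) obstacle — everything afterwards is bookkeeping.

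For \ref{l:0424e2}, I would start from the second form in \eqref{e:Ur}, $U_rx=Tx+\frac{r}{\|Tx-x\|}(Tx-x)$, so $U_rx-y=(Tx-y)+\frac{r}{\|Tx-x\|}(Tx-x)$. Expanding the square gives $\|U_rx-y\|^2=\|Tx-y\|^2+2\frac{r}{\|Tx-x\|}\scal{Tx-y}{Tx-x}+r^2$. The cross term is where $\tau_x$ reappears: writing $\scal{Tx-y}{Tx-x}=\scal{Tx-x}{Tx-x}+\scal{x-y}{Tx-x}=\|Tx-x\|^2-\scal{x-y}{x-Tx}$ and substituting $\scal{x-y}{x-Tx}=\|x-Tx\|(\tau_x+r+\|x-Tx\|)$ from the definition of $\tau_x$, the term $\frac{2r}{\|Tx-x\|}\scal{Tx-y}{Tx-x}$ collapses to $-2r^2-2r\tau_x$ (after the $\|Tx-x\|^2$ contributions cancel), yielding $\|U_rx-y\|^2=\|Tx-y\|^2-r^2-2r\tau_x$. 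The inequality $\le\|Tx-y\|^2-r^2$ then follows from \ref{l:0424e1} and $r>0$.

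For \ref{l:0424e3}, I would instead start from the first form $U_rx=x+\frac{r+\|Tx-x\|}{\|Tx-x\|}(Tx-x)$, so $U_rx-y=(x-y)-\frac{r+\|x-Tx\|}{\|x-Tx\|}(x-Tx)$. Setting $t:=r+\|x-Tx\|$ for brevity, expansion gives $\|U_rx-y\|^2=\|x-y\|^2-\frac{2t}{\|x-Tx\|}\scal{x-y}{x-Tx}+t^2$. Substituting $\scal{x-y}{x-Tx}=\|x-Tx\|(\tau_x+t)$ again, the middle term becomes $-2t(\tau_x+t)=-2t\tau_x-2t^2$, so $\|U_rx-y\|^2=\|x-y\|^2-t^2-2\tau_x t$, which is the claimed equality with $t$ expanded back out. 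The first inequality is again \ref{l:0424e1} with $t>0$; the final inequality $\|x-y\|^2-(r+\|x-Tx\|)^2\le\|x-y\|^2-r^2-\|x-Tx\|^2$ is just $(r+\|x-Tx\|)^2\ge r^2+\|x-Tx\|^2$, i.e.\ $2r\|x-Tx\|\ge 0$. I would remark that \ref{l:0424e2} and \ref{l:0424e3} are in fact consistent with each other via the cutter inequality $\|Tx-y\|^2\le\|x-y\|^2-\|x-Tx\|^2$, which offers a useful sanity check but is not needed for the proof.
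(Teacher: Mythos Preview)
Your proof is correct and follows essentially the same approach as the paper: the same choice of $z=y+r(x-Tx)/\|x-Tx\|$ for \ref{l:0424e1}, and the same algebraic expansions of $\|U_rx-y\|^2$ built on the definition of $\tau_x$ for \ref{l:0424e2} and \ref{l:0424e3}. The only cosmetic difference is that the paper obtains \ref{l:0424e3} by starting from the identity in \ref{l:0424e2} and expanding $\|Tx-y\|^2=\|(x-y)+(Tx-x)\|^2$, whereas you expand $\|U_rx-y\|^2$ directly from the first form of $U_rx$; the computations are equivalent.
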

\begin{proof}
\ref{l:0424e1}:
Set $z := y + r(x-Tx)/\|x-Tx\|$.
Then $z\in \ball(y;r)\subseteq \Fix T$. 
Since $T$ is a cutter, we obtain
\begin{subequations}
\begin{align}
0 &\geq \scal{z-Tx}{x-Tx}\\
&=\scal{y+r(x-Tx)/\|x-Tx\|-Tx}{x-Tx}\\
&=\scal{y-Tx}{x-Tx} + r\|x-Tx\|\\
&=\scal{y-x}{x-Tx} + \|x-Tx\|^2 + r\|x-Tx\|.
\end{align}
\end{subequations}
Rearranging and dividing by $\|x-Tx\|$ yields 
$\scal{x-y}{(x-Tx)/\|x-Tx\|} \geq r+\|x-Tx\|$ and
hence $\tau_x\geq 0$.

\ref{l:0424e2}:
Using \eqref{e:Ur}, we derive the identity from
\begin{subequations}
\begin{align}
\|U_rx-y\|^2 &= \big\| x+ (\|x-Tx\|+r)/\|x-Tx\|(Tx-x)-y\big\|^2\\
&= \big\|(Tx-y) + r (Tx-x)/\|Tx-x\|\big\|^2\\
&=\|Tx-y\|^2 + r^2 + 2r\scal{(Tx-x)+(x-y)}{(Tx-x)/\|Tx-x\|}\\
&=\|Tx-y\|^2 + r^2 +2r\|x-Tx\| - 2r\scal{x-y}{(x-Tx)/\|x-Tx\|}\\
&=\|Tx-y\|^2 - r^2 - 2r\tau_x. 
\end{align}
\end{subequations}
The inequality follows immediately from \ref{l:0424e1}. 

\ref{l:0424e3}:
Using \ref{l:0424e2}, we obtain
\begin{subequations}
\begin{align}
\|U_rx-y\|^2 &= \|(x-y)+(Tx-x)\|^2 - r^2 - 2r\tau_x\\
&= \|x-y\|^2 + \|x-Tx\|^2 + 2\scal{x-y}{Tx-x} - r^2 - 2r\tau_x\\
&= \|x-y\|^2 - \|x-Tx\|^2 - 2(\tau_x+r)\|x-Tx\|-r^2-2r\tau_x\\
&=  \|x-y\|^2 - (r+\|x-Tx\|)^2 -2\tau_x(r+\|x-Tx\|). 
\end{align}
\end{subequations}
The inequalities now follow from \ref{l:0424e1}. 
\qed
\end{proof}

We note in passing that $U_r$ itself is not necessarily a cutter:

\begin{example}[$U_r$ need not be a cutter]
\label{ex:0425a}
Suppose that $X=\RR$ and that 
$T$ is the subgradient projector associated with the function
$f\colon\RR\to\RR\colon x\mapsto x^2-1$.
Then $\Fix T = [-1,1]$. 
Let $r\in\RP := \menge{\xi\in\RR}{\xi\geq 0}$.
Then 
\begin{equation}
(\forall x\in \RR\smallsetminus\Fix T)
\quad
U_rx  = \frac{x}{2} + \frac{1}{2x} - r\sgn(x).
\end{equation}
Choosing $y:=1\in\Fix T$ and $x:=y+\varepsilon\notin\Fix T$, where
$\varepsilon\in\RPP$, 
we may check that $U_r$ is not a cutter\footnote{In fact, $U_r$ is not even
a relaxed cutter in the sense of \cite[Definition~2.1.30]{Cegielski}.}
 when $\varepsilon$ is sufficiently small and $r>0$. 
\end{example}

We now obtain the following result
concerning a relaxed version\footnote{$U_{r,\eta}$ can also be called a generalized
relaxation of $T$ with relaxation parameter $\eta$; see \cite[Definition~2.4.1]{Cegielski} .} of $U_r$.
Item~\ref{c:0424f5} also follows from \cite[Corollary~2.4.3]{Cegielski}. 

\begin{corollary}
\label{c:0424f}
Let $y\in \Fix T$, let $r\in\RPP$, let $\eta\in\RP$,
and suppose that $\ball(y;r)\subseteq \Fix T$ and that
$x\in X\smallsetminus \Fix T$. 
Set
\begin{equation}
U_{r,\eta}x := 
x + \eta \frac{r+\|x-Tx\|}{\|Tx-x\|}(Tx-x). 
\end{equation}
Then the following hold\footnote{We note that 
item~\ref{c:0424f4} can also be deduced from \cite[(2.27)]{Cegielski} with $\lambda
= (r+\|x-Tx\|)/\|x-Tx\|$, $z=y$, and $\delta=r$ in 
\cite[Proposition~2.1.41]{Cegielski}. 
This observation, as well as a similar one for \ref{c:0424f5}, is due to a referee.}:
\begin{enumerate}
\item
\label{c:0424f1}
$U_{r,\eta}x = (1-\eta)x + \eta U_r x$. 
\item
\label{c:0424f2}
$\|U_{r,\eta}x-y\|^2 = 
\eta\|U_{r}x-y\|^2 + (1-\eta)\|x-y\|^2 - \eta(1-\eta)\|x-U_{r}x\|^2$. 
\item
\label{c:0424f3}
$\|U_rx-x\| = r + \|x-Tx\|$. 
\item
\label{c:0424f4}
$\|U_rx-y\|^2 \leq \|x-y\|^2 - (r + \|x-Tx\|)^2 
= \|x-y\|^2 - \|x-U_{r}x\|^2$. 
\item
\label{c:0424f5}
$\|U_{r,\eta}x-y\|^2 \leq \|x-y\|^2 - \eta(2-\eta)(r + \|x-Tx\|)^2
= \|x-y\|^2 - \eta^{-1}(2-\eta)\|x-U_{r,\eta}x\|^2$. 
\end{enumerate}
\end{corollary}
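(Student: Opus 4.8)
The plan is to prove the five items in the order listed, since each is short and only~\ref{c:0424f4} appeals to the earlier Lemma~\ref{l:0424e}. Item~\ref{c:0424f1} is a one-line substitution: expanding the definitions of $U_{r,\eta}$ and of $U_r$ (via~\eqref{e:Ur}, using $x\notin\Fix T$) and collecting the terms in $x$ shows that both $U_{r,\eta}x$ and $(1-\eta)x+\eta U_rx$ equal $x+\eta\tfrac{r+\|Tx-x\|}{\|Tx-x\|}(Tx-x)$. Item~\ref{c:0424f2} is then the standard convex-combination identity $\|(1-\eta)u+\eta v-y\|^2 = (1-\eta)\|u-y\|^2+\eta\|v-y\|^2-\eta(1-\eta)\|u-v\|^2$, which is a polynomial identity in $\eta$ valid for every $\eta\in\RR$ (see, e.g., \cite{BC2011}), applied with $u=x$ and $v=U_rx$ and combined with~\ref{c:0424f1}. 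Item~\ref{c:0424f3} is immediate from $U_rx-x=\tfrac{r+\|Tx-x\|}{\|Tx-x\|}(Tx-x)$, whose norm is $r+\|x-Tx\|$.

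For item~\ref{c:0424f4}, the inequality $\|U_rx-y\|^2\le\|x-y\|^2-(r+\|x-Tx\|)^2$ is exactly the middle estimate in Lemma~\ref{l:0424e}\ref{l:0424e3} --- this is the only place the constraint qualification $\ball(y;r)\subseteq\Fix T$ is used, entering through $\tau_x\ge0$ --- and the accompanying equality is just~\ref{c:0424f3}. Finally, for item~\ref{c:0424f5} I substitute the bound $\|U_rx-y\|^2\le\|x-y\|^2-\|x-U_rx\|^2$ from~\ref{c:0424f4} into the identity~\ref{c:0424f2}; because $\eta\ge0$ the substitution preserves the inequality, and collecting the coefficients of $\|x-U_rx\|^2$ (namely $-\eta-\eta(1-\eta)=-\eta(2-\eta)$) yields $\|U_{r,\eta}x-y\|^2\le\|x-y\|^2-\eta(2-\eta)\|x-U_rx\|^2$. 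Rewriting $\|x-U_rx\|^2=(r+\|x-Tx\|)^2$ via~\ref{c:0424f3} gives the first form, and since $\|x-U_{r,\eta}x\|=\eta(r+\|x-Tx\|)$ (using $\eta\ge0$) one has $\eta(2-\eta)(r+\|x-Tx\|)^2=\eta^{-1}(2-\eta)\|x-U_{r,\eta}x\|^2$ for $\eta>0$, which is the second form.

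I do not expect a genuine obstacle here: once Lemma~\ref{l:0424e} is available, the whole corollary is elementary algebra. The only points requiring a little care are keeping track of the sign of $\eta$ when passing from the identity~\ref{c:0424f2} to the inequality in~\ref{c:0424f5} (where $\eta\ge0$ is what makes the substitution legitimate), and reading the final equality in~\ref{c:0424f5} for $\eta>0$, the case $\eta=0$ being trivial since then $U_{r,\eta}=\Id$.
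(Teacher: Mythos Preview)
Your proposal is correct and follows essentially the same approach as the paper: items~\ref{c:0424f1}--\ref{c:0424f3} are verified directly, \ref{c:0424f4} is obtained by combining~\ref{c:0424f3} with Lemma~\ref{l:0424e}\ref{l:0424e3}, and \ref{c:0424f5} by feeding~\ref{c:0424f4} into the convex-combination identity~\ref{c:0424f2}. Your added remarks on the sign of $\eta$ and the $\eta=0$ case are accurate and slightly more explicit than the paper's terse ``Combine \ref{c:0424f1}--\ref{c:0424f4}.''
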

\begin{proof}
\ref{c:0424f1}: This is a simple verification. 

\ref{c:0424f2}: 
Using \ref{c:0424f1}, we obtain
$\|U_{r,\eta}x-y\|^2 = \|(1-\eta)(x-y)+\eta(U_{r}x-y)\|^2$.
Now use \cite[Corollary~2.14]{BC2011} to obtain the identity. 

\ref{c:0424f3}:
This is immediate from \eqref{e:Ur}. 

\ref{c:0424f4}:
Combine \ref{c:0424f3} with Lemma~\ref{l:0424e}\ref{l:0424e3}. 

\ref{c:0424f5}:
Combine \ref{c:0424f1}--\ref{c:0424f4}.
\qed
\end{proof}

\subsection{Quasi Projectors}

\begin{definition}[quasi projector]
$Q\colon X\to X$ is a 
\emph{quasi projector} of $C$ 
if $\ran Q = \Fix Q = C$ and
$(\forall x\in X)(\forall c\in C)$
$\|Qx-c\|\leq\|x-c\|$. 
\end{definition}

\begin{example}[projectors are quasi projectors]
$P_C$ is a quasi projector of $C$.
More generally\footnote{This observation is a due to a referee.}, 
if $R\colon X\to X$ is quasi nonexpansive, i.e.,
$(\forall x\in X)(\forall y\in \Fix R)$ $\|Rx-y\|\leq\|x-y\|$ and
$C\subseteq \Fix R$, then $P_C\circ R$ is a quasi projector of
$C$. 
\end{example}

It can be shown (see \cite[Proposition~3.4.4]{Thesis}) 
that when $C$ is an affine subspace, then 
the only quasi projector of $C$ is the projector.
However, we will now see that for certain cones there
are quasi projectors different from projectors. 

\begin{proposition}[reflector of an obtuse cone]
\label{p:0424c}
{\rm (See \cite[Lemma~2.1]{BK}.)}
Suppose that 
$C$ is an obtuse cone, i.e., 
$\RP C = C$ and 
$C^\ominus := \menge{x\in X}{\sup\scal{C}{x}=0} \subseteq - C$. 
Then the reflector $R_C := 2P_C-\Id$ is nonexpansive and 
$\ran R_C = \Fix C = C$.
\end{proposition}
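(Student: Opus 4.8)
The plan is to notice that, in this statement, the nonexpansiveness of $R_C$ and the identification of its fixed point set are formalities valid for any nonempty closed convex set, so that the genuine content is the \emph{range} identity $\ran R_C = C$. Accordingly I would first record that $P_C$ is firmly nonexpansive \cite{BC2011}, whence $R_C = 2P_C - \Id$ is nonexpansive, and that $R_C x = x \Leftrightarrow P_C x = x \Leftrightarrow x \in C$, so $\Fix R_C = C$; in particular $C = \Fix R_C \subseteq \ran R_C$. The work therefore reduces to proving the reverse inclusion $\ran R_C \subseteq C$, and this is the only place where obtuseness will be used.

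To obtain $\ran R_C \subseteq C$, I would fix $x \in X$, write $p := P_C x$, and extract from the variational characterization of the projection — namely $\scal{x-p}{c-p}\le 0$ for every $c\in C$ — the membership $x - p \in C^\ominus$. Since $C = \RP C$ is a cone containing $0$, both $0$ and $2p$ belong to $C$; substituting these two choices of $c$ gives $\scal{x-p}{p}\ge 0$ and $\scal{x-p}{p}\le 0$, hence
\[
\scal{x-p}{p} = 0 .
\]
Then, for an arbitrary $c\in C$, $\scal{x-p}{c} = \scal{x-p}{c-p} + \scal{x-p}{p} \le 0$, so $\sup\scal{C}{x-p}=0$ (the value being attained at $c=0$), i.e.\ $x - p \in C^\ominus$. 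This step is exactly Moreau's conical decomposition of $x$ relative to $C$ and its polar cone, and one could instead quote that theorem directly.

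Finally I would invoke the obtuseness hypothesis $C^\ominus \subseteq -C$, which turns $x-p\in C^\ominus$ into $p - x \in C$, and use that a convex cone is closed under addition (if $a,b\in C$ then $\tfrac12(a+b)\in C$ by convexity and $a+b\in C$ by $\RP C=C$): since $p\in C$ and $p-x\in C$,
\[
R_C x = 2p - x = p + (p-x)\in C .
\]
Hence $\ran R_C\subseteq C$, and combining with the first paragraph yields $\ran R_C = \Fix R_C = C$, as claimed.

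I do not expect a serious obstacle. The one step that must be done with care is the derivation $x - P_C x \in C^\ominus$, where the cone structure of $C$ (so that $0$ and $2P_C x$ are admissible test points in the projection inequality) is essential; obtuseness and the closure of $C$ under addition then finish the argument immediately. If a shorter write-up is preferred, citing Moreau's decomposition theorem in place of the explicit projection-inequality computation is the natural move.
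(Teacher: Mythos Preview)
The paper does not supply its own proof of this proposition; it simply records the result with a pointer to \cite[Lemma~2.1]{BK}. Your argument is correct and is essentially the standard one that reference contains: Moreau's conical decomposition (or the direct projection-inequality computation you wrote out) yields $x-P_Cx\in C^\ominus\subseteq -C$, after which closure of the convex cone $C$ under addition gives $R_Cx = P_Cx + (P_Cx - x)\in C$. There is nothing to add on the mathematical side.

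One incidental remark: the displayed conclusion in the paper contains a typo --- ``$\Fix C$'' should read ``$\Fix R_C$'' --- which you have silently corrected in your write-up.
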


\begin{corollary}
\label{c:0424d}
Suppose that $C$ is an obtuse cone and let
$\lambda \colon X \to [1,2]$. 
Then 
\begin{equation}
Q\colon X\to X\colon x\mapsto \big(1-\lambda(x)\big)x + \lambda(x) P_Cx
\end{equation}
is a quasi projector of $C$. 
\end{corollary}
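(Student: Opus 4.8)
The plan is to rewrite $Q$ as a pointwise convex combination of $P_C$ and the reflector $R_C := 2P_C - \Id$, and then read off all three required properties from Proposition~\ref{p:0424c}. Concretely, since $x = 2P_Cx - R_Cx$, substituting into the definition of $Q$ gives, for every $x \in X$,
\begin{equation*}
Qx = \big(1-\lambda(x)\big)\big(2P_Cx - R_Cx\big) + \lambda(x)P_Cx = \big(2-\lambda(x)\big)P_Cx + \big(\lambda(x)-1\big)R_Cx .
\end{equation*}
Because $\lambda(x) \in [1,2]$, the coefficients $2-\lambda(x)$ and $\lambda(x)-1$ are nonnegative and sum to $1$, so $Qx$ is a genuine convex combination of the two points $P_Cx$ and $R_Cx$.

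First I would establish $\ran Q = \Fix Q = C$. By Proposition~\ref{p:0424c}, $\ran R_C = C$, hence both $P_Cx \in C$ and $R_Cx \in C$; since $C$ is convex, the convex combination $Qx$ lies in $C$, i.e.\ $\ran Q \subseteq C$. Conversely, if $x \in C$ then $P_Cx = x = R_Cx$, so $Qx = x$, which shows $C \subseteq \Fix Q$. Since trivially $\Fix Q \subseteq \ran Q$, we get the chain $C \subseteq \Fix Q \subseteq \ran Q \subseteq C$, whence $\ran Q = \Fix Q = C$.

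Next I would prove the quasi nonexpansivity inequality. Fix $x \in X$ and $c \in C$. Applying the triangle inequality to the convex combination above,
\begin{equation*}
\|Qx - c\| \leq \big(2-\lambda(x)\big)\|P_Cx - c\| + \big(\lambda(x)-1\big)\|R_Cx - c\| .
\end{equation*}
Now $\|P_Cx - c\| \leq \|x - c\|$ because $P_C$ is nonexpansive and $c \in C = \Fix P_C$, while $\|R_Cx - c\| = \|R_Cx - R_Cc\| \leq \|x - c\|$ because $R_C$ is nonexpansive by Proposition~\ref{p:0424c} and fixes $c \in C$. Substituting these bounds and using that the two coefficients sum to $1$ yields $\|Qx - c\| \leq \|x - c\|$, exactly the quasi projector inequality.

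I do not expect a serious obstacle here: the only point needing a little care is the observation that $\lambda(x)\in[1,2]$ is precisely what turns $Qx$ into a \emph{convex} (rather than merely affine) combination of the two points $P_Cx$ and $R_Cx$ of $C$, which is what forces $Qx\in C$ and hence $\ran Q\subseteq C$. Everything else reduces to the triangle inequality together with the nonexpansiveness facts already recorded in Proposition~\ref{p:0424c}.
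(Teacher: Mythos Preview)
Your proposal is correct and follows essentially the same route as the paper: the paper's proof simply notes that $Qx\in[P_Cx,R_Cx]$ and invokes Proposition~\ref{p:0424c}, leaving the verification implicit, whereas you explicitly compute the convex-combination coefficients and then check the three quasi-projector conditions via convexity of $C$ and the nonexpansiveness of $P_C$ and $R_C$. There is no substantive difference in approach, only in the level of detail.
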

\begin{proof}
Since, for every $x\in X$,
we have $Q(x)\in [P_Cx,R_Cx]$ and the result thus follows from
Proposition~\ref{p:0424c}.
\qed
\end{proof}

\begin{example}
Suppose $X=\RR^d$ and $C=\RP^d$.
Then $R_C$ is a quasi projector.
\end{example}
\begin{proof}
Because $C^\ominus = -C$,
this follows from Corollary~\ref{c:0424d} with $\lambda(x)\equiv 2$. 
\qed
\end{proof}

\begin{remark}
A quasi projector need not be 
continuous because we may choose $\lambda$ in Proposition~\ref{p:0424c}
discontinuously. 
\end{remark}

\subsection{\fejer\ Monotone Sequences}

Recall that a sequence $(x_n)_\nnn$ in $X$ is \fejer\ monotone with
respect to a nonempty subset $S$ of $X$ if
\begin{equation}
(\forall s\in S)(\forall\nnn)\quad
\|x_{n+1}-s\|\leq\|x_n-s\|.
\end{equation}
Clearly, every \fejer\ monotone sequence is bounded.

We will require the following key result. 

\begin{fact}[Raik]
\label{f:Raik}
Let $(x_n)_\nnn$ be a sequence in $X$ that
is \fejer\ monotone with respect to a 
subset $S$ of $X$.
If $\inte S\neq\varnothing$, then
$(x_n)_\nnn$ converges strongly to some point in $X$ and
$\sum_\nnn\|x_n-x_{n+1}\| <\pinf$.
\end{fact}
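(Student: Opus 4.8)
The plan is to exploit the nonemptiness of $\inte S$ to extract a \emph{quantitative} contraction towards a fixed ball, which forces the consecutive differences to be summable; Cauchy-ness and hence strong convergence then come for free. First I would fix a point $s_0\in\inte S$ together with a radius $\rho>0$ such that $\ball(s_0;\rho)\subseteq S$. For each $\nnn$, consider the unit-vector direction $u_n := (x_n-x_{n+1})/\|x_n-x_{n+1}\|$ (if $x_n=x_{n+1}$ there is nothing to estimate for that index), and apply the \fejer\ inequality to the specific point $s := s_0 + \rho u_n\in S$:
\begin{equation}
\|x_{n+1}-s_0-\rho u_n\|^2\leq\|x_n-s_0-\rho u_n\|^2.
\end{equation}
Expanding both sides and cancelling the common terms $\|s_0\|^2$, $\rho^2$, and the cross term $2\rho\scal{u_n}{\cdot}$ only partially, I would rearrange to isolate $\scal{x_n-x_{n+1}}{\rho u_n}$, which equals $\rho\|x_n-x_{n+1}\|$. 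The outcome is the estimate
\begin{equation}
2\rho\|x_n-x_{n+1}\| \leq \|x_n-s_0\|^2 - \|x_{n+1}-s_0\|^2.
\end{equation}

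This is the crux of the argument. Since $(x_n)_\nnn$ is \fejer\ monotone with respect to $S\ni s_0$, the sequence $\big(\|x_n-s_0\|^2\big)_\nnn$ is nonincreasing and bounded below by $0$, hence convergent; in particular its consecutive differences form a telescoping series with finite total. Summing the displayed inequality over $n=0,\dots,N$ gives
\begin{equation}
2\rho\sum_{n=0}^{N}\|x_n-x_{n+1}\| \leq \|x_0-s_0\|^2 - \|x_{N+1}-s_0\|^2 \leq \|x_0-s_0\|^2,
\end{equation}
and letting $N\to\pinf$ yields $\sum_\nnn\|x_n-x_{n+1}\|\leq (2\rho)^{-1}\|x_0-s_0\|^2<\pinf$.

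Finally, summability of $\|x_n-x_{n+1}\|$ implies that $(x_n)_\nnn$ is Cauchy: for $m>n$, $\|x_n-x_m\|\leq\sum_{k=n}^{m-1}\|x_k-x_{k+1}\|$, which is the tail of a convergent series and hence tends to $0$. Since $X$ is a complete (Hilbert) space, $(x_n)_\nnn$ converges strongly to some point of $X$, and both conclusions of the Fact are established. The only delicate point is the algebra in deriving the key estimate — one must be careful that the point $s=s_0+\rho u_n$ genuinely lies in $S$ (it does, being in $\ball(s_0;\rho)$) and that the expansion is done so that the direction $u_n$ aligned with $x_n-x_{n+1}$ produces the term $\rho\|x_n-x_{n+1}\|$ with the correct sign; everything else is routine.
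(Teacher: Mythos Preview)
Your argument is correct: choosing $s=s_0+\rho u_n\in\ball(s_0;\rho)\subseteq S$ and expanding the \fejer\ inequality indeed yields
\[
2\rho\|x_n-x_{n+1}\|\leq \|x_n-s_0\|^2-\|x_{n+1}-s_0\|^2,
\]
after which telescoping and completeness finish the job exactly as you describe. (The case $x_n=x_{n+1}$ is trivial, and your check that $s\in S$ is the only point requiring care.)

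As for comparison with the paper: the paper does not supply its own proof of this Fact --- it is stated as a known result and simply refers the reader to Raik's original paper and to \cite[Proposition~5.10]{BC2011}. Your write-up is precisely the standard argument that appears in those references, so there is no genuine methodological difference to discuss; you have reconstructed the cited proof rather than found an alternative route.
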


\begin{proof}
See \cite{Raik} or e.g.\ \cite[Proposition~5.10]{BC2011}. 
\qed
\end{proof}

\subsection{Differentiability}

\begin{lemma}
\label{l:0426a}
Suppose that $X$ is finite-dimensional, 
let $f\colon X\to\RR$ be convex and \frechet\ differentiable such
that $\inf f(X)<0$. 
Then for every $\rho\in\RPP$, we have
\begin{equation}
\inf \menge{\|\nabla f(x)\|}{x\in \ball(0;\rho)\cap
f^{-1}(\RPP)}>0.
\end{equation}
\end{lemma}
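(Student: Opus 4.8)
The plan is to give a direct quantitative estimate rather than an argument by contradiction; in fact the asserted infimum is bounded below by an explicit positive constant. First I would dispose of the trivial case: if $\ball(0;\rho)\cap f^{-1}(\RPP)=\varnothing$, then the infimum is $\pinf$ and there is nothing to prove, so I may assume this set is nonempty.

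Next I would fix once and for all a point $y_0\in X$ with $f(y_0)<0$, which exists because $\inf f(X)<0$, and set $\delta:=-f(y_0)>0$. Since $f$ is convex and (Fr\'echet, hence G\^ateaux) differentiable, the subgradient inequality holds with the gradient playing the role of the subgradient: $(\forall x\in X)(\forall y\in X)$ $f(y)\geq f(x)+\scal{\nabla f(x)}{y-x}$. Applying this with base point an arbitrary $x\in\ball(0;\rho)$ satisfying $f(x)>0$ and test point $y_0$ gives $f(y_0)\geq f(x)+\scal{\nabla f(x)}{y_0-x}$, which rearranges to $\scal{\nabla f(x)}{x-y_0}\geq f(x)-f(y_0)>-f(y_0)=\delta$, the strict inequality coming from $f(x)>0$.

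Then Cauchy--Schwarz yields $\|\nabla f(x)\|\,\|x-y_0\|\geq\scal{\nabla f(x)}{x-y_0}>\delta$, while $\|x-y_0\|\leq\|x\|+\|y_0\|\leq\rho+\|y_0\|$; hence $\|\nabla f(x)\|>\delta/(\rho+\|y_0\|)$. Since $x$ was an arbitrary element of $\ball(0;\rho)\cap f^{-1}(\RPP)$ and the right-hand side is a strictly positive constant independent of $x$, the infimum in question is bounded below by $\delta/(\rho+\|y_0\|)>0$.

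There is no serious obstacle here; the only points requiring care are applying the subgradient inequality in the correct direction (base point $x$, test point $y_0$) and keeping the estimate $f(x)-f(y_0)>\delta$ strict. I note that this argument uses neither finite-dimensionality of $X$ nor continuity of $\nabla f$. A less self-contained alternative, which does exploit the hypotheses as stated, would argue by contradiction: assuming the infimum were $0$, one extracts via compactness of $\ball(0;\rho)$ a subsequence $x_k\to\bar x$ with $\nabla f(x_k)\to 0$; continuity of the gradient of a differentiable convex function in finite dimensions gives $\nabla f(\bar x)=0$, so $\bar x$ minimizes $f$ and $f(\bar x)=\inf f(X)<0$, contradicting $f(\bar x)=\lim_k f(x_k)\geq 0$.
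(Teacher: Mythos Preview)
Your proof is correct and takes a genuinely different route from the paper's. The paper argues by contradiction and compactness --- essentially the alternative you sketch in your final paragraph: assuming the infimum is zero, one extracts from the bounded set $\ball(0;\rho)$ a convergent sequence $x_n\to \bar x$ with $\nabla f(x_n)\to 0$; continuity of $\nabla f$ (automatic for a differentiable convex function on a finite-dimensional space) gives $\nabla f(\bar x)=0$, so $\bar x$ is a global minimizer with $f(\bar x)=\inf f(X)<0$, contradicting $f(\bar x)\geq 0$. Your main argument, by contrast, is direct and quantitative: the subgradient inequality at $x$ evaluated at a fixed Slater point $y_0$, together with Cauchy--Schwarz and the triangle inequality, yields the explicit uniform lower bound $-f(y_0)/(\rho+\|y_0\|)$. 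This buys you an explicit constant and, as you correctly observe, dispenses with both finite-dimensionality and continuity of the gradient --- the lemma in fact holds verbatim in any Hilbert space. The paper's argument is slightly shorter to write but genuinely requires the finite-dimensional hypothesis it carries.
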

\begin{proof}
Let $\rho\in\RPP$ and assume to the contrary that the conclusion
fails.
Then there exists a sequence $(x_n)_\nnn$ in 
$\ball(0;\rho)\cap f^{-1}(\RPP)$ and a point $x\in
\ball(0;\rho)$ such that $x_n\to x$
and $\nabla f(x_n)\to 0$. 
It follows that $f(x)\geq 0$ and $\nabla f(x)=0$, which is
clearly 
absurd. 
\qed
\end{proof}

\section{Finitely Convergent Cutter Methods}

\label{s:main}

From now on, we assume that
\begin{subequations}
\begin{equation}
\label{e:onr}
\text{$(r_n)_\nnn$ is a sequence in $\RPP$ such that $r_n\to 0$,}
\end{equation}
that
\begin{equation}
\text{$(\eta_n)_\nnn$ is a sequence in $\left]0,2\right]$,}
\end{equation}
and that
\begin{equation}
\text{$Q_C$ is a quasi projector of $C$.}
\end{equation}
\end{subequations}
We further assume that
$x_0\in C$ and that
$(x_n)_\nnn$ is generated by
\begin{equation}
\label{e:seq}
(\forall\nnn)\quad
x_{n+1} :=
\begin{cases}
 Q_C\big(x_n+\eta_n(U_{r_n}x_n-x_n)\big), &\text{if $x_n\notin \Fix T$;}\\
x_n, &\text{otherwise.}
\end{cases}
\end{equation}
Note that $(x_n)_\nnn$ lies in $C$. 
Also observe that if $x_n$ lies in $\Fix T$, then so does $x_{n+1}$. 

We are now ready for our first main result.

\begin{theorem}
\label{t:main1}
Suppose that $\inte(C\cap \Fix T)\neq\varnothing$ and
that $\sum_{\nnn}\eta_nr_n=\pinf$.
Then $(x_n)_\nnn$ lies eventually in $C\cap\Fix T$. 
\end{theorem}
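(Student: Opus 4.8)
The plan is to argue by contradiction: suppose $x_n \notin \Fix T$ for infinitely many $n$, and derive a contradiction with the divergent-series hypothesis $\sum_\nnn \eta_n r_n = \pinf$. Fix a point $y \in \inte(C\cap \Fix T)$ together with a radius $\rho \in \RPP$ such that $\ball(y;\rho) \subseteq C\cap \Fix T$. Since $r_n \to 0$, there is an index $N$ with $r_n \leq \rho$ for all $n \geq N$, so that $\ball(y; r_n) \subseteq \Fix T$ for all such $n$. The key point is that then Lemma~\ref{l:0424e} and Corollary~\ref{c:0424f} become applicable at every step $n \geq N$ at which $x_n \notin \Fix T$: Corollary~\ref{c:0424f}\ref{c:0424f5} gives, at such $n$,
\begin{equation}
\|U_{r_n,\eta_n}x_n - y\|^2 \leq \|x_n - y\|^2 - \eta_n(2-\eta_n)\big(r_n + \|x_n - Tx_n\|\big)^2 \leq \|x_n-y\|^2 - \eta_n(2-\eta_n)r_n^2,
\end{equation}
and since $Q_C$ is a quasi projector of $C$ with $y \in C$, applying $Q_C$ does not increase the distance to $y$; hence $\|x_{n+1}-y\| \leq \|U_{r_n,\eta_n}x_n - y\| \leq \|x_n - y\|$.

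Next I would observe that at steps $n \geq N$ with $x_n \in \Fix T$ we have $x_{n+1} = x_n$, so the inequality $\|x_{n+1}-y\| \leq \|x_n-y\|$ holds for \emph{all} $n \geq N$; thus $(x_n)_{n\geq N}$ is \fejer\ monotone with respect to $\{y\}$ — and in fact, since $y$ was an arbitrary interior point and $x_0 \in C$, with respect to $\ball(y;\rho)$ once we also handle the initial segment (for $n < N$ one can either enlarge the bound or simply note monotonicity is only needed eventually). In particular $(\|x_n - y\|)_\nnn$ is nonincreasing and bounded below, hence convergent; summing the displayed inequality over those $n \geq N$ at which $x_n \notin \Fix T$ yields
\begin{equation}
\sum_{\substack{n \geq N \\ x_n \notin \Fix T}} \eta_n(2-\eta_n) r_n^2 \leq \|x_N - y\|^2 < \pinf .
\end{equation}
This is the crux of the quantitative estimate.

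The main obstacle is closing the gap between the convergent series $\sum \eta_n(2-\eta_n)r_n^2$ (over the ``bad'' indices) and the divergent series $\sum_\nnn \eta_n r_n$. Here is where I expect the real work: the exponent on $r_n$ is $2$, not $1$, and there is an extra factor $(2-\eta_n)$, so a naive comparison fails. The way out is to exploit the \emph{sharper} form of Corollary~\ref{c:0424f}\ref{c:0424f5}, namely the term $\big(r_n + \|x_n - Tx_n\|\big)^2 \geq r_n^2 + 2r_n\|x_n - Tx_n\|$, together with \fejer\ monotonicity: by Fact~\ref{f:Raik} (Raik's theorem), since $\inte\ball(y;\rho) \neq \varnothing$, the sequence $(x_n)_\nnn$ converges strongly to some $\bar x$ and $\sum_\nnn \|x_n - x_{n+1}\| < \pinf$. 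I would then relate $\|x_n - x_{n+1}\|$ to $\eta_n(r_n + \|x_n - Tx_n\|)$ via Corollary~\ref{c:0424f}\ref{c:0424f1}, \ref{c:0424f3} (so that $\|x_n - U_{r_n,\eta_n}x_n\| = \eta_n(r_n + \|x_n - Tx_n\|)$, and $Q_C$ nonexpansive at $y$ controls $\|x_{n+1}-x_n\|$ suitably — more carefully, one uses that $\|x_n - x_{n+1}\|$ and $\eta_n\|x_n - Tx_n\|$ are comparable up to the already-summable discrepancy). Combining $\sum_\nnn \|x_n - x_{n+1}\| < \pinf$ with the assumption $\sum_\nnn \eta_n r_n = \pinf$ forces infinitely many indices where $r_n \gg \|x_n - Tx_n\|$ fails, i.e.\ where $\|x_n - Tx_n\|$ is not negligible relative to $r_n$; plugging such indices back into the sharp estimate produces a contribution to $\sum \eta_n r_n \|x_n - Tx_n\|$ that must also be finite, ultimately contradicting $\sum_\nnn \eta_n r_n = \pinf$. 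Organizing this chain of comparisons cleanly — deciding exactly which summable error terms to absorb where — is the delicate part; everything else is the routine bookkeeping already set up by Lemma~\ref{l:0424e} and Corollary~\ref{c:0424f}.
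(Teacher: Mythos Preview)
Your argument has a genuine gap at the decisive step. The squared estimate from Corollary~\ref{c:0424f}\ref{c:0424f5} only yields $\sum_n \eta_n(2-\eta_n)r_n^2 < \pinf$, which is the engine of Theorem~\ref{t:main2}, not of Theorem~\ref{t:main1}; it gives no contradiction with $\sum_n \eta_n r_n = \pinf$, and is in fact vacuous when $\eta_n = 2$, which is permitted here. You notice this and try to recover by applying Fact~\ref{f:Raik} to $(x_n)_\nnn$, but the resulting inequality points the wrong way: since $x_n \in C$ and $Q_C$ is a quasi projector,
\[
\|x_{n+1}-x_n\| = \|Q_C(U_{r_n,\eta_n}x_n)-x_n\| \leq \|U_{r_n,\eta_n}x_n - x_n\| = \eta_n\big(r_n + \|x_n-Tx_n\|\big),
\]
so $\sum_n \|x_{n+1}-x_n\| < \pinf$ furnishes no upper bound whatsoever on $\sum_n \eta_n r_n$. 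The claim that $\|x_n-x_{n+1}\|$ and $\eta_n(r_n+\|x_n-Tx_n\|)$ are ``comparable up to an already-summable discrepancy'' is exactly the point that fails: $Q_C$ can collapse an arbitrarily large displacement, and nothing you have written controls that loss.

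The idea you are missing is to \emph{interleave}. The paper applies Fact~\ref{f:Raik} not to $(x_n)_\nnn$ but to
\[
\big(x_m,\;U_{r_m,\eta_m}x_m,\;x_{m+1},\;U_{r_{m+1},\eta_{m+1}}x_{m+1},\;x_{m+2},\;\ldots\big),
\]
which is \fejer\ monotone with respect to an entire ball $\ball(z;r)\subseteq C\cap\Fix T$ because each half-step is separately nonexpansive toward every $y\in\ball(z;r)$: first $\|U_{r_n,\eta_n}x_n-y\|\leq\|x_n-y\|$ by Corollary~\ref{c:0424f}\ref{c:0424f5}, then $\|x_{n+1}-y\|\leq\|U_{r_n,\eta_n}x_n-y\|$ by the quasi-projector property. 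Raik's result then gives, directly and with exponent one,
\[
\sum_{n\geq m}\|x_n-U_{r_n,\eta_n}x_n\| \;=\; \sum_{n\geq m}\eta_n\big(r_n+\|x_n-Tx_n\|\big) \;<\; \pinf,
\]
hence $\sum_{n\geq m}\eta_n r_n<\pinf$ --- the desired contradiction. No squared terms, no factor $(2-\eta_n)$, and none of the delicate comparisons you were anticipating.
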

\begin{proof}
We argue by contradiction.
If the conclusion is false, then \emph{no} term of the sequence in $(x_n)_\nnn$
lies in $\Fix T$, i.e., $(x_n)_\nnn$ lies in $X\smallsetminus \Fix T$. 
By assumption, there exist 
$z\in C\cap\Fix T$ and
$r\in\RPP$ and such that
$\ball(z;2r)\subseteq C\cap\Fix T$. 
Hence
\begin{equation}
\label{e:0424g}
\big(\forall y\in \ball(z;r)\big)\quad
\ball(y;r)\subseteq C\cap \Fix T.
\end{equation}
Since $r_n\to 0$, there exists $m\in\NN$ such
that $n\geq m$ implies $r_n\leq r$. 
Now let $n\geq m$ and $y\in\ball(z;r)$.
Using the assumption that $Q_C$ is a quasi projector of $C$,
that $y\in C$, \eqref{e:0424g}, and Corollary~\ref{c:0424f}, we obtain
\begin{subequations}
\begin{align}
\|x_{n+1}-y\| &= \big\|Q_C\big(x_n+\eta_n(U_{r_n}x_n-x_n)\big)-y\big\|\\
&\leq \|x_n+\eta_n(U_{r_n}x_n-x_n)-y\|\\
&\leq \|x_n-y\|. 
\end{align}
\end{subequations}
Hence the sequence
\begin{equation}
\big(x_m,x_m+\eta_m(U_{r_m}x_m-x_m),x_{m+1},x_{m+1}+\eta_{m+1}(U_{r_{m+1}}x_{m+1}-x_{m+1}),x_{m+2},\ldots\big)
\end{equation}
is \fejer\ monotone with respect to $\ball(z;r)$. 
It follows from Fact~\ref{f:Raik} and Corollary~\ref{c:0424f}\ref{c:0424f3} that 
\begin{equation}
\pinf>
\sum_{n\geq m} \eta_n\|x_n-U_{r_n}x_n\| = 
\sum_{n\geq m} \eta_n\big(r_n+\|x_n-Tx_n\|\big) \geq 
\sum_{n\geq m}\eta_nr_n, 
\end{equation}
which is absurd because $\sum_\nnn \eta_nr_n=\pinf$. 
\qed
\end{proof}

We now present our second main result.
Compared to Theorem~\ref{t:main1}, we have a less restrictive
assumption on $(\Fix T,C)$ but a more restrictive one 
on the parameters $(r_n,\eta_n)$. 
The proof of Theorem~\ref{t:main2} is more or less implicit in
the works by Crombez \cite{Crombez} and Polyak
\cite{PolyakHaifa}; see Remark~\ref{r:Polyak} and
Remark~\ref{r:Crombez}. 

\begin{theorem}
\label{t:main2}
Suppose that $C\cap\inte \Fix T\neq\varnothing$ and that 
$\sum_\nnn \eta_n(2-\eta_n)r_n^2=\pinf$. 
Then $(x_n)_\nnn$ lies eventually in $C\cap\Fix T$. 
\end{theorem}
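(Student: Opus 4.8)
The plan is to mimic the contradiction argument of Theorem~\ref{t:main1}, but to replace the appeal to Fej\'er monotonicity and Fact~\ref{f:Raik} by a direct telescoping estimate; this is exactly what permits the weaker geometric hypothesis $C\cap\inte\Fix T\neq\varnothing$ (a single interior point of $\Fix T$ lying in $C$, rather than an interior point of the intersection $C\cap\Fix T$) at the price of the stronger series condition $\sum_\nnn\eta_n(2-\eta_n)r_n^2=\pinf$. The engine of the proof is Corollary~\ref{c:0424f}\ref{c:0424f5}, which supplies a \emph{quadratic} decrease per step.

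First I would suppose, towards a contradiction, that the conclusion fails. As in the proof of Theorem~\ref{t:main1}, and since once a term of $(x_n)_\nnn$ lands in $\Fix T$ the sequence becomes constant and stays in $C\cap\Fix T$, this forces \emph{every} term of $(x_n)_\nnn$ to lie in $X\smallsetminus\Fix T$. Using $C\cap\inte\Fix T\neq\varnothing$, I would pick $z\in C$ and $r\in\RPP$ with $\ball(z;r)\subseteq\Fix T$. Because $r_n\to 0$, there is $m\in\NN$ such that $r_n\leq r$, and hence $\ball(z;r_n)\subseteq\ball(z;r)\subseteq\Fix T$, for every $n\geq m$.

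Next, for $n\geq m$ I would invoke Corollary~\ref{c:0424f} with $y=z$, radius $r_n$, and relaxation parameter $\eta_n\in\left]0,2\right]\subseteq\RP$; its hypotheses hold since $x_n\in X\smallsetminus\Fix T$ and $\ball(z;r_n)\subseteq\Fix T$. Recalling from Corollary~\ref{c:0424f}\ref{c:0424f1} that $x_n+\eta_n(U_{r_n}x_n-x_n)=U_{r_n,\eta_n}x_n$, and using that $Q_C$ is a quasi projector of $C$ together with $z\in C$, I obtain
\begin{align}
\|x_{n+1}-z\|^2
= \big\|Q_C\big(U_{r_n,\eta_n}x_n\big)-z\big\|^2
&\leq \|U_{r_n,\eta_n}x_n-z\|^2\\
&\leq \|x_n-z\|^2 - \eta_n(2-\eta_n)\big(r_n+\|x_n-Tx_n\|\big)^2\\
&\leq \|x_n-z\|^2 - \eta_n(2-\eta_n)r_n^2,
\end{align}
where the middle inequality is Corollary~\ref{c:0424f}\ref{c:0424f5} and the last one uses $\eta_n(2-\eta_n)\geq 0$ together with $r_n+\|x_n-Tx_n\|\geq r_n\geq 0$. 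Telescoping from $n=m$ onward then gives $\sum_{n\geq m}\eta_n(2-\eta_n)r_n^2\leq\|x_m-z\|^2<\pinf$, contradicting the hypothesis $\sum_\nnn\eta_n(2-\eta_n)r_n^2=\pinf$.

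I do not expect a genuine obstacle here: the argument is essentially routine once Corollary~\ref{c:0424f}\ref{c:0424f5} is in hand. The only points needing care are the book-keeping guaranteeing that the hypotheses of Corollary~\ref{c:0424f} hold for all large $n$ (which is precisely where $r_n\to 0$ is used, to ensure $\ball(z;r_n)\subseteq\Fix T$), and the elementary sign/monotonicity facts used to pass from $(r_n+\|x_n-Tx_n\|)^2$ down to $r_n^2$; in particular the degenerate case $\eta_n=2$ is harmless, since then $\eta_n(2-\eta_n)=0$ and the corresponding summand vanishes, consistent with the weighted series condition.
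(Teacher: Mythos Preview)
Your proposal is correct and follows essentially the same route as the paper: argue by contradiction, fix a point $z\in C\cap\inte\Fix T$ with $\ball(z;r)\subseteq\Fix T$, use $r_n\to 0$ to ensure $\ball(z;r_n)\subseteq\Fix T$ for all large $n$, then combine the quasi-projector inequality with Corollary~\ref{c:0424f}\ref{c:0424f5} and telescope. If anything, you are slightly more explicit than the paper in recording that $z\in C$ (needed for the quasi-projector step) and that the relevant ball has radius $r_n$; the paper leaves these implicit.
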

\begin{proof}
Similarly to the proof of Theorem~\ref{t:main1},
we argue by contradiction and assume the conclusion is false.
Then $(x_n)_\nnn$ must lie in $X\smallsetminus \Fix T$. 
By assumption, there exist 
$y\in \Fix T$ and $r\in\RPP$ such that
$\ball(y;r)\subseteq \Fix T$. 
Because $r_n\to 0$, there exists $m\in\NN$ such that
$n\geq m$ implies $r_n\leq r$.
Let $n\geq m$.
Using also the assumption that $Q_C$ is a quasi projector of $C$
and Corollary~\ref{c:0424f}\ref{c:0424f5}, 
we deduce that 
\begin{subequations}
\begin{align}
\|x_{n+1}-y\|^2 &=
\big\|Q_C\big(x_n+\eta_n(U_{r_n}x_n-x_n)\big)-y\big\|^2\\
&\leq \|x_n+\eta_n(U_{r_n}x_n-x_n)-y\|^2\\
&\leq \|x_n-y\|^2 -
\eta_n(2-\eta_n)\big(r_n+\|x_n-Tx_n\|\big)^2\\
&\leq \|x_n-y\|^2 - \eta_n(2-\eta_n)r_n^2.
\end{align}
\end{subequations}
This implies
\begin{equation}
\|x_m-y\|^2 \geq \sum_{n\geq m}\big(\|x_n-y\|^2 -
\|x_{n+1}-y\|^2\big) \geq 
\sum_{n\geq m} \eta_n(2-\eta_n)r_n^2 = \pinf,
\end{equation}
which contradicts our assumption on the parameters.
\qed
\end{proof}

Theorem~\ref{t:main1} and Theorem~\ref{t:main2} have various applications.
Since every resolvent of a maximally monotone operator is firmly
nonexpansive and hence a cutter, we obtain the following result.

\begin{corollary}
\label{c:Shawn}
Let $A\colon X\To X$ be maximally monotone, suppose that
$Q_C=P_C$, that $T= (\Id+A)^{-1}$, and
that one of following holds:
\begin{enumerate}
\item $\inte(C\cap A^{-1}0)\neq\varnothing$
and $\sum_\nnn \eta_nr_n=\pinf$.
\item $C\cap \inte A^{-1}0 \neq\varnothing$
and $\sum_\nnn \eta_n(2-\eta_n)r_n^2=\pinf$.
\end{enumerate}
Then $(x_n)_\nnn$ lies eventually in $C\cap A^{-1}0$. 
\end{corollary}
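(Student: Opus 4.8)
The plan is to verify that Corollary~\ref{c:Shawn} is an immediate specialization of Theorem~\ref{t:main1} and Theorem~\ref{t:main2}, so essentially no new work is needed beyond checking that the hypotheses match. First I would recall that for a maximally monotone operator $A\colon X\To X$, the resolvent $T=(\Id+A)^{-1} = J_A$ is a single-valued operator defined on all of $X$ (by Minty's theorem), and that it is firmly nonexpansive. A firmly nonexpansive operator satisfies $\|Tx-Ty\|^2 + \|(\Id-T)x-(\Id-T)y\|^2 \leq \|x-y\|^2$ for all $x,y$; taking $y\in\Fix T$ gives precisely the cutter inequality $\|Tx-y\|^2 + \|x-Tx\|^2 \leq \|x-y\|^2$. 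Hence $T$ is a cutter. I would also note the elementary identity $\Fix T = \Fix J_A = A^{-1}0$, since $x = J_A x \iff x \in (\Id+A)x \iff 0\in Ax$.

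Next I would observe that the hypothesis $C\cap\Fix T \neq\varnothing$, which is a standing assumption of the paper for \eqref{e:0424a} and is needed to run the iteration \eqref{e:seq}, is guaranteed under either branch: in case~(i), $\inte(C\cap A^{-1}0)\neq\varnothing$ forces $C\cap A^{-1}0 = C\cap\Fix T\neq\varnothing$, and in case~(ii), $C\cap\inte A^{-1}0\neq\varnothing$ likewise forces $C\cap\Fix T\neq\varnothing$. With $Q_C = P_C$ (which is a quasi projector of $C$ by the example following the definition of quasi projector) and the standing assumptions \eqref{e:onr} on $(r_n)$ and $(\eta_n)$ in force, the sequence $(x_n)_\nnn$ in the corollary is exactly the sequence generated by \eqref{e:seq}.

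Then I would simply split into the two cases. In case~(i), translating $A^{-1}0 = \Fix T$, the assumption reads $\inte(C\cap\Fix T)\neq\varnothing$ together with $\sum_\nnn \eta_n r_n = \pinf$, which is precisely the hypothesis of Theorem~\ref{t:main1}; that theorem then yields that $(x_n)_\nnn$ lies eventually in $C\cap\Fix T = C\cap A^{-1}0$. In case~(ii), the assumption reads $C\cap\inte\Fix T\neq\varnothing$ together with $\sum_\nnn \eta_n(2-\eta_n)r_n^2 = \pinf$, which is precisely the hypothesis of Theorem~\ref{t:main2}, and again we conclude $(x_n)_\nnn$ is eventually in $C\cap\Fix T = C\cap A^{-1}0$.

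There is really no substantive obstacle here; the only points requiring care — and worth stating explicitly for the reader — are that $J_A$ is single-valued and everywhere-defined (so the iteration makes sense), that firm nonexpansiveness implies the cutter property, and that $\Fix J_A = A^{-1}0$. All three are standard (see \cite{BC2011}). I would write the proof in two or three sentences: establish that $T$ is a cutter with $\Fix T = A^{-1}0$, note $P_C$ is a quasi projector, and then invoke Theorem~\ref{t:main1} in case~(i) and Theorem~\ref{t:main2} in case~(ii).

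\begin{proof}
Since $A$ is maximally monotone, $T = (\Id+A)^{-1} = J_A$ is single-valued, has full domain $X$, and is firmly nonexpansive (see, e.g., \cite{BC2011}); in particular, for every $x\in X$ and every $y\in\Fix T$ we have $\|Tx-y\|^2 + \|x-Tx\|^2\leq\|x-y\|^2$, so $T$ is a cutter. Moreover, $x\in\Fix T \iff x\in(\Id+A)x \iff 0\in Ax$, i.e., $\Fix T = A^{-1}0$. Finally, $Q_C = P_C$ is a quasi projector of $C$. Thus the sequence $(x_n)_\nnn$ described in the statement is exactly the one generated by \eqref{e:seq}.

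(i): Here $\inte(C\cap\Fix T) = \inte(C\cap A^{-1}0)\neq\varnothing$ and $\sum_\nnn\eta_nr_n = \pinf$, so Theorem~\ref{t:main1} applies and $(x_n)_\nnn$ lies eventually in $C\cap\Fix T = C\cap A^{-1}0$.

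(ii): Here $C\cap\inte\Fix T = C\cap\inte A^{-1}0\neq\varnothing$ and $\sum_\nnn\eta_n(2-\eta_n)r_n^2 = \pinf$, so Theorem~\ref{t:main2} applies and $(x_n)_\nnn$ lies eventually in $C\cap\Fix T = C\cap A^{-1}0$.
\qed
\end{proof}
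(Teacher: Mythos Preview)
Your proposal is correct and follows exactly the paper's approach: the paper simply notes that the resolvent of a maximally monotone operator is firmly nonexpansive and hence a cutter with $\Fix T = A^{-1}0$, so Theorem~\ref{t:main1} and Theorem~\ref{t:main2} apply directly. Your write-up merely makes explicit the routine verifications (full domain, $\Fix J_A = A^{-1}0$, $P_C$ is a quasi projector) that the paper leaves implicit.
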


Corollary~\ref{c:Shawn} applies in particular
to finding a constrained critical point of a 
convex function. When specializing further 
to a normal cone operator, we obtain the following result.

\begin{example}[convex feasibility]
Let $D$ be a nonempty closed convex subset of $X$,
and suppose that $Q_C = P_C$, that $T = P_D$, and that one of
the following holds:
\begin{enumerate}
\item 
$\inte(C\cap D)\neq\varnothing$ and $\sum_\nnn r_n=\pinf$.
\item
$C\cap\inte D\neq\varnothing$ and $\sum_\nnn r_n^2 = \pinf$.
\end{enumerate}
Then the sequence $(x_n)_\nnn$, generated by
\begin{equation}
(\forall \nnn)\quad
x_{n+1} := P_C\bigg(P_Dx_n + r_n\frac{P_Dx_n-x_n}{\|P_Dx_n-x_n\|} \bigg)
\end{equation}
if $x_n\notin D$ and $x_{n+1}:=x_n$ if $x_n\in D$, 
lies eventually in $C\cap D$.
\end{example}

\begin{remark}[relationship to Polyak's work]
\label{r:Polyak}
In \cite{PolyakHaifa}, 
B.T.\ Polyak considers random algorithms for solving constrained
systems of convex inequalities. Suppose that only one consistent
constrained convex inequality is
considered. Hence the cutters used are all subgradient
projectors (see Example~\ref{ex:0424b}). 
Then his algorithm coincides with the one considered
in this section and thus is comparable.
We note that our Theorem~\ref{t:main1} is more flexible because
Polyak requires $\sum_\nnn r_n^2=\pinf$ (see
\cite[Theorem~1 and Section~4.2]{PolyakHaifa}) provided that
$0<\inf_\nnn \eta_n \leq \sup_\nnn \eta_n < 2$ while
we require only $\sum_\nnn r_n=\pinf$ in this case. 
Regarding our Theorem~\ref{t:main2}, we note that
our proof essentially follows his proof which actually works
for cutters --- not just subgradient projectors --- and under a
less restrictive constraint qualification. 
\end{remark}

\begin{remark}[relationship to Crombez's work]
\label{r:Crombez}
In \cite{Crombez}, G.\ Crombez considers asynchronous parallel
algorithms for finding a point in the intersection of the fixed
point sets of finitely
many cutters --- without the constraint set $C$. 
Again, we consider the case when we are dealing with only one
cutter. Then Crombez's convergence result (see
\cite[Theorem~2.7]{Crombez}) is similar to Theorem~\ref{t:main2};
however, he requires that 
the radius $r$ of some ball contained in $\Fix T$ be known which
may not always be realistic in practical applications. 
\end{remark}

We will continue our comparison in
Section~\ref{s:compare}. 
While it is not too difficult to extend Theorem~\ref{t:main1} and
Theorem~\ref{t:main2} to deal with finitely many cutters, we have 
opted here for simplicity rather than maximal generality.
Instead, we focus in the next section 
on limiting examples. 

We conclude this section with a comment on the proximal point algorithm.

\begin{remark}[proximal point algorithm]
Suppose that $A$ is a maximally monotone operator on $X$ 
(see, e.g., \cite{BC2011} for relevant background information)
such that $Z := A^{-1}0\neq\varnothing$.
Then its resolvent $J_A := (\Id+A)^{-1}$ is firmly nonexpansive
--- hence a cutter --- with $\Fix J_A = Z$. 
Let $y_0\in X$ and set $(\forall \nnn)$ 
$y_{n+1} := J_Ay_n$. Then $(y_n)_\nnn$,
the sequence generated by the proximal point algorithm, 
converges weakly to a point in $Z$.
If
\begin{equation}
\label{e:0429a}
(\exi \bar{x}\in X)\quad
0\in\inte A\bar{x},
\end{equation}
then the convergence is finite
(see \cite[Theorem~3]{Rockprox}).
On the other hand, our algorithms impose that
$\inte \Fix T \neq\varnothing$, i.e.,
\begin{equation}
\label{e:0429b}
(\exi \bar{x}\in X)\quad
\bar{x}\in\inte A^{-1}0. 
\end{equation}
(Note that \eqref{e:0429a} and \eqref{e:0429b} are
independent: 
If $A$ is $\partial \|\cdot\|$, then
$0\in\inte A0$ yet $\inte A^{-1}0=\varnothing$.
And if $A = \nabla d^2_{\ball(0;1)}$,
then $0\in \inte A^{-1}0$ while $A = 2(\Id-P_{\ball(0;1)})$ is single-valued.)
\end{remark}

\section{Limiting Examples}

\label{s:limex}

In this section, we collect several examples that
illustrate the boundaries of the theory.

We start by showing that the conclusion of Theorem~\ref{t:main1} and
Theorem~\ref{t:main2} both may fail to hold if the divergent-series
condition is not satisfied. 

\begin{example}[divergent-series condition is important]
Suppose that $X=C=\RR$, 
that $f\colon \RR\to\RR\colon x\mapsto x^2-1$,
and that $T=G_f$
is the subgradient projector associated with $f$.
Suppose that $x_0>1$,
set $r_{-1} := x_0-1>0$ and 
$(\forall\nnn)$ 
$r_{n} := r_{n-1}^2/(4(1+r_{n-1}))$. 
Then $(r_n)_\nnn$ lies in $\RPP$,
$r_n\to 0$, and
$\sum_{\nnn} r_n <\pinf$ and
hence $\sum_\nnn r_n^2 <\pinf$.
However, the sequence $(x_n)_\nnn$ 
generated by \eqref{e:seq} lies in 
$\left]1,\pinf\right[$ and hence
does not converge finitely to a point in $\Fix T = [-1,1]$.
Furthermore, the classical subgradient projector
iteration $(\forall\nnn)$ $y_{n+1}=Ty_n$ converges
to some point in $\Fix T$, but not finitely when $y_0\notin \Fix T$. 
\end{example}
\begin{proof}
It is clear that $\Fix T= [-1,1]$.
Observe that 
$(\forall\nnn)$
$0<r_n \leq ({1}/{4})r_{n-1}\leq
(1/4)^{n+1} r_{-1}$. 
It follows that $r_n\to 0$ and that
$\sum_\nnn r_n$ and $\sum_\nnn r_n^2$ are
both convergent series. 
Now suppose that $r_{n-1} = x_n-1 > 0$ for some 
$\nnn$. 
It then follows from
Example~\ref{ex:0425a} that
\begin{equation}
x_{n+1} = \frac{x_n}{2} + \frac{1}{2x_n} - r_n
= \frac{(x_n-1)^2}{2x_n} + 1 - r_n
= \frac{r_{n-1}^2}{2(1+r_{n-1})} + 1 - r_n
= r_n+1.
\end{equation}
Hence, by induction,
$(\forall\nnn)$ $x_{n} = 1+r_{n-1}$ and
therefore $x_n\to 1^+$. 

As for the sequence $(y_n)_\nnn$, it is follows from
Polyak's seminal work (see \cite{Poljak}) that 
$(y_n)_\nnn$ converges to some point in $\Fix T$. 
However, by e.g.\ \cite[Proposition~9.9]{BWWX1},
$(y_n)_\nnn$ lies outside $\Fix T$ whenever $y_0$ does. 
\qed
\end{proof}

The next example illustrates that we cannot
expect finite convergence if the interior
of $\Fix T$ is empty, in the context
of Theorem~\ref{t:main1} and Theorem~\ref{t:main2}. 

\begin{example}[nonempty-interior condition is important]
Suppose that $X=C=\RR$, 
that $f\colon \RR\to\RR\colon x\mapsto x^2$,
and that $T=G_f$ is the subgradient projector
associated with $f$. 
Then $\Fix T=\{0\}$ and hence
$\inte\Fix T = \varnothing$. 
Set $x_0:=1/2$,
and set $(\forall\nnn)$
$w_n := (n+1)^{-1/2}$
and 
$r_n = w_n$ if $U_{w_n}x_n\neq 0$
and $r_n=2w_n$ if $U_{w_n}x_n=0$.
Then $r_n\to 0$ and $\sum_\nnn r_n^2=\pinf$.
The sequence $(x_n)_\nnn$ generated
by \eqref{e:seq} converges to $0$ but not finitely. 
\end{example}
\begin{proof}
The statements concerning $(r_n)_\nnn$ are clear.
It follows readily from the definition that
$(\forall x\in\RR)(\forall r\in\RP)$
$Tx= x/2$ and $U_rx = x/2 - r\sgn(x)$. 
Since $x_0=1/2$, $w_0=1$,
$U_1x_0 = -3/4\neq 0$, and $r_0=w_0=1$,
it follows that $0< |x_0/2| < r_0$.
We now show that for every $\nnn$, 
\begin{equation}
\label{e:0425b}
0 < |x_n/2| < r_n.
\end{equation}
This is clear for $n=0$. Now assume
\eqref{e:0425b} holds for some $\nnn$.

\emph{Case~1:} $|x_n| = 2w_n$.\\
Then $U_{w_n}x_n = x_n/2 - \sgn(x_n)w_n = 0$.
Hence $r_n=2w_n$ and thus
$x_{n+1} = U_{r_n}x_n  
= x_n/2 - 2w_n\sgn(x_n)
= \sgn(x_n)w_n - 2w_n\sgn(x_n)
= -\sgn(x_n)w_n$. 
Thus
$0 < |x_{n+1}/2| = w_n/2 = 1/(2\sqrt{n+1}) < 1/\sqrt{n+2}=w_{n+1}\leq
r_{n+1}$, which yields \eqref{e:0425b} with $n$ replaced by $n+1$. 

\emph{Case~2:} $|x_n| \neq 2w_n$.\\
Then $U_{w_n}x_n = x_n/2 - \sgn(x_n)w_n \neq 0$.
Hence $r_n = w_n$ and thus 
$x_{n+1} = U_{r_n}x_n  = x_n/2 - r_n\sgn(x_n)$.
It follows that 
$|x_{n+1}| = r_n - |x_n/2|>0$.
Hence $0<|x_{n+1}/2|$ and
also 
$|x_{n+1}| < r_n = w_n < 2w_{n+1}\leq 2r_{n+1}$.
Again, this is \eqref{e:0425b} with $n$ replaced by $n+1$.

It follows now by induction that \eqref{e:0425b} holds for every $\nnn$.
\qed
\end{proof}

We now illustrate that when $\Fix T=\varnothing$, then
$(x_n)_\nnn$ may fail to converge. 

\begin{example}
Suppose that $X=C=\RR$,
that $f\colon \RR\to\RR\colon x\mapsto x^2+1$,
and that $T=G_f$ is the subgradient projector associated with $f$.
Let $y_0\in\RR$ and suppose that
$(\forall\nnn)$ $y_{n+1}:=Ty_n$.
Then $(y_n)_\nnn$ is either not well defined or it diverges.
Suppose that $x_0 > 1/\sqrt{3}$,
set $k_0 := x_0 - 1/\sqrt{3}>0$ and
$(\forall\nnn)$ $k_{n+1} := \sqrt{(n+1)/(n+2)}k_n$.
Suppose that 
\begin{equation}
(\forall\nnn)\quad 
r_n := \frac{1}{2}\bigg( \sqrt{3} + 2k_{n+1} + k_n -
\frac{1}{k_n+1/\sqrt{3}}\bigg).
\end{equation}
Then $r_n\to 0^+$ and $\sum_\nnn r_n^2 = \pinf$.
Moreover, the sequence $(x_n)_\nnn$ generated by 
\eqref{e:seq} diverges. 
\end{example}
\begin{proof}
Clearly, $\Fix T = \varnothing$ and
one checks that 
\begin{equation}
\label{e:0425c}
(\forall r\in\RP)(\forall x\in\RR\smallsetminus\{0\})\quad
U_rx = \frac{x}{2} - \frac{1}{2x} - r\sgn(x).
\end{equation}
If some $y_n=0$, then the sequence $(y_n)_\nnn$ is not well defined.

\emph{Case~1}: $(\exi\nnn)$ $y_n=1/\sqrt{3}$.\\
Then $x_{n+1}= Tx_n = U_0x_n = x_n/2 - 1/(2x_n) = -1/\sqrt{3}
= -x_n$ and similarly $x_{n+2}=-x_{n+1}=x_n$.
Hence the sequence eventually oscillates between 
$1/\sqrt{3}$ and $-1/\sqrt{3}$. 

\emph{Case~2}: $(\exi\nnn)$ $|y_n|=1$.\\
Then $y_{n+1} = 0$ and the sequence is not well defined. 

\emph{Case~3}: $(\forall\nnn)$ $|y_n|\notin\{1,1/\sqrt{3}\}$.\\
Using the Arithmetic Mean--Geometric Mean inequality, we obtain
\begin{equation}
|y_{n+1}-y_n| = \left|\frac{y_n}{2}-\frac{1}{2y_n}-y_n\right|
= \frac{1}{2}\left| y_n + \frac{1}{y_n}\right|
= \frac{1}{2}\left( |y_n| + \frac{1}{|y_n|}\right) \geq 1
\end{equation}
for every $\nnn$. 
Therefore, $(y_n)_\nnn$ is divergent or not well defined. 

We now turn to the sequence $(x_n)_\nnn$. 
Observe that $0 < k_n = \sqrt{n/(n+1)}k_{n-1} = \cdots =
k_0/\sqrt{n+1} \to 0^+$ and hence $(k_n)_\nnn$ is strictly
decreasing. 
It follows that $r_n\to 0^+$ and
that $r_n > (2k_{n+1}+k_n)/2 > 3k_{n+1}/2 = 3k_0/(2\sqrt{n+2})$.
Thus, $\sum_\nnn r_n^2 = \pinf$. 
Next, \eqref{e:0425c} yields
\begin{subequations}
\begin{align}
x_1 &= \frac{x_0}{2} - \frac{1}{2x_0} - r_0 \\
&= \frac{k_0+1/\sqrt{3}}{2} - \frac{1}{2\big(k_0+1/\sqrt{3})} -
\frac{1}{2} \bigg( \sqrt{3} + 2k_{1} + k_0 -
\frac{1}{k_0+1/\sqrt{3}}\bigg)\\
&= -\frac{1}{\sqrt{3}} - k_1.
\end{align}
\end{subequations}
Hence $x_1<0$ and we then see analogously that $x_2 =
1/\sqrt{3}+k_2 > 0$.
We inductively obtain
\begin{equation}
(\forall\nnn)\quad
0<x_{2n} = \frac{1}{\sqrt{3}} + k_{2n}
\;\;\text{and}\;\;
0>x_{2n+1} = -\frac{1}{\sqrt{3}} - k_{2n+1}.
\end{equation}
It follows that $(-1)^nx_n \to 1/\sqrt{3}$;
therefore, $(x_n)_\nnn$ is divergent. 
\qed
\end{proof}

\section{Comparison}

\label{s:compare}

In this section, we assume for notational simplicity\footnote{If we replace
\frechet\ differentiability by mere continuity, then we may
consider a selection of the subdifferential operator $\partial f$
instead.} that 
\begin{equation}
\text{$f\colon X\to\RR$ is convex and \frechet\ differentiable
with $\menge{x\in X}{f(x)\leq 0}\neq\varnothing$}
\end{equation}
and that 
\begin{equation}
T=G_f\colon X\to X\colon x\mapsto 
\begin{cases}
\displaystyle x - \frac{f(x)}{\|\nabla f(x)\|^2}\nabla f(x), &\text{if $f(x)>0$;}\\
x, &\text{otherwise}
\end{cases}
\end{equation}
is the associated subgradient projector (see
Example~\ref{ex:0424b}). 
Then \eqref{e:Ur} turns into 
\begin{equation}
U_rx = 
\begin{cases}
\displaystyle x - \frac{f(x)+r\|\nabla f(x)\|}{\|\nabla f(x)\|^2}
\nabla f(x), &\text{if $f(x)>0$;}\\
x, &\text{otherwise}
\end{cases}
\end{equation}
and \eqref{e:seq} into 
\begin{equation}
\label{e:ours}
(\forall\nnn)\quad
x_{n+1} =
\begin{cases}
 Q_C\bigg(x_n-\eta_n\displaystyle\frac{f(x_n)+r_n\|\nabla
 f(x_n)\|}{\|\nabla f(x_n)\|^2}\nabla
 f(x_n)\bigg), &\text{if $f(x_n)>0$;}\\
x_n, &\text{otherwise.}
\end{cases}
\end{equation}

In the algorithmic setting of Section~\ref{s:main},
Polyak uses $\eta \equiv\eta_n\in \left]0,2,\right[$
(e.g.\ $\eta=1.8$; see \cite[Section~4.3]{PolyakHaifa}). 
In the present setting, his framework requires $\sum_\nnn
r_n^2=\pinf$. 

When $C=X$, one also has 
the following similar yet different update formula
\begin{equation}
\label{e:ccp}
(\forall\nnn)\quad
y_{n+1} =
\begin{cases}
y_n-\eta_n\displaystyle\frac{f(y_n)+\varepsilon_n}{\|\nabla f(y_n)\|^2}\nabla
 f(y_n), &\text{if $f(y_n)>0$;}\\
y_n, &\text{otherwise,}
\end{cases}
\end{equation}
where $0<\inf_\nnn \eta_n \leq \sup_\nnn \eta_n<2$ and 
$(\varepsilon_n)_\nnn$ is a strictly decreasing sequence in $\RPP$ 
with $\sum_\nnn\varepsilon_n=\pinf$. 
In this setting, this is also known as the 
\emph{Modified Cyclic Subgradient Projection Algorithm (MCSPA)},
which finds its historical roots in works by 
Fukushima \cite{Fuku}, by De Pierro and Iusem \cite{DPI}, and
by Censor and Lent \cite{CL};
see also \cite{CCP,IusMol86,IusMol87,Pang} for related works. 
Note that MCSPA requires the existence of a \emph{Slater point}, i.e.,
$\inf f(X)<0$, which is more restrictive than our assumptions
(consider, e.g., the squared distance to the unit ball). 
Let us now link the assumption on the parameters of the MCSPA \eqref{e:ccp} 
to \eqref{e:ours}. 

\begin{proposition}
Suppose that $X=C$ is finite-dimensional,
that $\inf f(X)<0$,
that $\eta_n\equiv 1$, 
that $\sum_\nnn r_n=\pinf$ (recall \eqref{e:onr}), and
that $(\forall\nnn)$ $\varepsilon_n = r_n\|\nabla
f(x_n)\| > 0$. 
Then $\varepsilon_n \to 0$ and $\sum_\nnn\varepsilon_n=\pinf$.
\end{proposition}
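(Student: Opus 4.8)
The plan is to estimate $\|\nabla f(x_n)\|$ both from above and from below along the generated sequence $(x_n)_\nnn$: the claim $\varepsilon_n\to 0$ will follow from an upper bound together with $r_n\to 0$, and $\sum_\nnn\varepsilon_n=\pinf$ from a uniform lower bound together with $\sum_\nnn r_n=\pinf$.

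First I would verify that $(x_n)_\nnn$ is bounded. Since $f$ is continuous and $\inf f(X)<0$, the set $\menge{x\in X}{f(x)<0}$ is nonempty and open and lies in $\Fix T=\menge{x\in X}{f(x)\leq 0}$, so $\inte\Fix T\neq\varnothing$; pick $y\in X$ and $\rho\in\RPP$ with $\ball(y;\rho)\subseteq\Fix T$. Because $r_n\to 0$, there is $m\in\NN$ with $r_n\leq\rho$ whenever $n\geq m$. For such $n$, either $f(x_n)\leq 0$ and then $x_{n+1}=x_n$, or $f(x_n)>0$ and then (recall $\eta_n\equiv 1$) $x_{n+1}=Q_C(U_{r_n}x_n)$; in the latter case, using that $Q_C$ is a quasi projector of $C=X\ni y$ and then Corollary~\ref{c:0424f}\ref{c:0424f4}, we get $\|x_{n+1}-y\|\leq\|U_{r_n}x_n-y\|\leq\|x_n-y\|$. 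Hence $(\|x_n-y\|)_{n\geq m}$ is nonincreasing, so $(x_n)_\nnn$ is bounded, say $\{x_n\mid\nnn\}\subseteq\ball(0;\rho')$ for some $\rho'\in\RPP$. As $X$ is finite-dimensional and $\nabla f$ is continuous, $\nabla f$ is bounded on $\ball(0;\rho')$, so $M:=\sup_\nnn\|\nabla f(x_n)\|<\pinf$ and $0<\varepsilon_n=r_n\|\nabla f(x_n)\|\leq Mr_n\to 0$; this gives $\varepsilon_n\to 0$.

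For the lower bound I would distinguish two cases. If $x_n\in\Fix T$ for some $\nnn$, then \eqref{e:seq} gives $x_k=x_n=:\bar x$ for every $k\geq n$; since $r_k>0$ and $\varepsilon_k>0$ by hypothesis, $\nabla f(\bar x)\neq 0$, and therefore $\{\|\nabla f(x_n)\|\mid\nnn\}$ is a finite set of strictly positive numbers, so $c:=\inf_\nnn\|\nabla f(x_n)\|>0$. Otherwise $f(x_n)>0$ for every $\nnn$, so $x_n\in\ball(0;\rho')\cap f^{-1}(\RPP)$ for every $\nnn$, and Lemma~\ref{l:0426a} (applicable since $\inf f(X)<0$) yields $c:=\inf_\nnn\|\nabla f(x_n)\|\geq\inf\menge{\|\nabla f(x)\|}{x\in\ball(0;\rho')\cap f^{-1}(\RPP)}>0$. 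In both cases $\varepsilon_n=r_n\|\nabla f(x_n)\|\geq cr_n$ for all $\nnn$, hence $\sum_\nnn\varepsilon_n\geq c\sum_\nnn r_n=\pinf$.

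The only genuinely delicate point is the uniform lower bound on $\|\nabla f(x_n)\|$ when the iterates never reach $\Fix T$: mere pointwise positivity of the gradients would not preclude $\|\nabla f(x_n)\|\to 0$, and it is precisely the finite-dimensional Slater-type estimate of Lemma~\ref{l:0426a}, fed with the ball $\ball(0;\rho')$ obtained from the boundedness (\fejer\ monotonicity) argument, that excludes this. Alternatively, Theorem~\ref{t:main1} applies here and shows directly that $(x_n)_\nnn$ is eventually constant, which already places one in the first case above.
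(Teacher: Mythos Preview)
Your proof is correct and follows essentially the same route as the paper: boundedness of $(x_n)_\nnn$ via Corollary~\ref{c:0424f}\ref{c:0424f4}, an upper bound on $\|\nabla f(x_n)\|$ from continuity on a compact set, and a uniform positive lower bound from Lemma~\ref{l:0426a}. You are actually more careful than the paper on two points: you spell out why $\inte\Fix T\neq\varnothing$ (from the Slater condition $\inf f(X)<0$), and you separate the case where the sequence eventually enters $\Fix T$, which the paper's proof leaves implicit; your closing remark that Theorem~\ref{t:main1} already forces finite convergence (hence the first case) is a valid shortcut the paper does not mention.
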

\begin{proof}
Corollary~\ref{c:0424f}\ref{c:0424f4} implies 
that $(x_n)_\nnn$ is bounded. 
Because $\nabla f$ is continuous, we obtain
that $\sigma := \sup_\nnn \|\nabla f(x_n)\|<\pinf$. 
By Lemma~\ref{l:0426a}, there
exists $\alpha\in\RPP$ such that
if $f(x_n)>0$, then $\|\nabla f(x_n)\|\geq \alpha$. 
Hence
\begin{equation}
(\forall\nnn)\quad
f(x_n)>0
\;\;\Rightarrow\;\;
0<\alpha r_n \leq \|\nabla f(x_n)\|r_n = \varepsilon_n \leq
\sigma r_n,
\end{equation}
and therefore $\sum_\nnn \varepsilon_n=\pinf$. 
\qed
\end{proof}

The next example shows that our assumptions are independent of
those on the MCSPA. 

\begin{example}
Suppose that $X=C=\RR$, that
$f\colon \RR\to\RR\colon x\mapsto x^2-1$,
that $r_n = (n+1)^{-1}$ if $n$ is even and
$r_n = n^{-1/2}$ if $n$ is odd,
and that $\eta_n\equiv 1$. 
Clearly, $r_n\to 0$ and $\sum_\nnn r_n^2=\pinf$. 
However, $(\varepsilon_n)_\nnn := (r_n|f'(x_n)|)_\nnn$
is not strictly decreasing.
\end{example}
\begin{proof}
The sequence $(x_n)_\nnn$ is bounded. 
Suppose that $f(x_n)>0$ for some $\nnn$.
By Example~\ref{ex:0425a},
\begin{equation}
\label{e:0426b}
x_{n+1} = U_{r_n}x_n = \frac{x_n}{2} + \frac{1}{2x_n} -
r_n\sgn(x_n).
\end{equation}
Assume that $n$ is even, say $n=2m$, where $m\geq 2$,
and that $1<x_{2m} < (2m+1)/2$.
Then $x_{2m} > 2x_{2m}/\sqrt{2m+1}$ and 
\begin{equation}
\varepsilon_{2m} =
r_{2m}|f'(x_{2m})|
= 2r_{2m}x_{2m}
= \frac{2x_{2m}}{2m+1}.
\end{equation}
Hence, using \eqref{e:0426b}, 
\begin{equation}
x_{2m+1} = \frac{x_{2m}}{2} + \frac{1}{2x_{2m}} - r_{2m}
> \frac{x_{2m}}{2} + \frac{1}{2m+1} - \frac{1}{2m+1} =
 \frac{x_{2m}}{2},
\end{equation}
and therefore
\begin{equation}
2x_{2m+1} > x_{2m} > \frac{2x_{2m}}{\sqrt{2m+1}}.
\end{equation}
Thus
$\varepsilon_{2m+1} = r_{2m+1}|f'(x_{2m+1})| = 2r_{2m+1}x_{2m+1}$. 
It follows that 
\begin{equation}
\varepsilon_{2m+1} = \frac{2x_{2m+1}}{\sqrt{2m+1}}
> \frac{2x_{2m}}{2m+1} = \varepsilon_{2m} 
\end{equation}
and the proof is complete.
\qed
\end{proof}


\section{Perspectives}

\label{s:persp}

Suppose that $X=\RR$ and
that $f\colon X\to\RR\colon x\mapsto x^2-1$. 
Let $T$ be the subgradient projector associated with $f$ and
assume that $C=X$. 
We chose 100 randomly chosen starting points in
the interval $[1,10^6]$.
In the following table, we record the performance
of the algorithms; 
here $(r_n,\eta_n)$ signals that \eqref{e:ours} was used,
while $\varepsilon_n$ points to \eqref{e:ccp} with
$\eta_n\equiv 1$. Mean and median refer to the number of
iterations until the current iterate was $10^{-6}$ feasible.
\begin{table}[h!] \centering
\begin{tabular}{@{}lrr@{}} \toprule
Algorithm for $x^2-1$ &Mean &Median\\ \midrule
$(r_n,\eta_n)=\big(1/(n+1),1\big)$ &$11.49$ & $13$ \\
$(r_n,\eta_n)=\big(1/(n+1),2\big)$ &$2$ & $2$ \\
$(r_n,\eta_n)=\big(1/\sqrt{n+1},1\big)$ &$10.83$ & $12$ \\
$(r_n,\eta_n)=\big(1/\sqrt{n+1},2\big)$ &$2$ & $2$ \\
$\varepsilon_n=1/(n+1)$ &$11.81$ & $13$ \\
$\varepsilon_n=1/\sqrt{n+1}$ &$12.19$ & $13$ \\
\bottomrule
 \end{tabular}
\end{table}

\noindent
Now let us instead consider  $f\colon X\to\RR\colon x\mapsto 100 x^2-1$.
The corresponding data are in the following table.
\begin{table}[h!] \centering
\begin{tabular}{@{}lrr@{}} \toprule
Algorithm for $100x^2-1$ &Mean &Median\\ \midrule
$(r_n,\eta_n)=\big(1/(n+1),1\big)$ &$13.29$ & $14$ \\
$(r_n,\eta_n)=\big(1/(n+1),2\big)$ &$12$ & $12$ \\
$(r_n,\eta_n)=\big(1/\sqrt{n+1},1\big)$ &$17.52$ & $19$ \\
$(r_n,\eta_n)=\big(1/\sqrt{n+1},2\big)$ &$105$ & $105$ \\
$\varepsilon_n=1/(n+1)$ &$15.27$ & $16$ \\
$\varepsilon_n=1/\sqrt{n+1}$ &$15.76$ & $17$ \\
\bottomrule
 \end{tabular}
\end{table}

\noindent
We observe that the performance of the algorithms
clearly depends on the step lengths $r_n$ and $\varepsilon_n$, on
the relaxation parameter $\eta_n$, and on the underlying
objective function $f$; however, \emph{the precise nature of this
dependence is rather unclear}.
It would thus be
interesting to perform numerical experiments on a wide variety of
problems and parameter choices with the goal to \emph{obtain guidelines
in the choice of algorithms and parameters} for the user. 

Another avenue for future research is to 
\emph{construct a broad framework} that
encompasses the present as well as previous related 
finite convergence results (see references in Section~\ref{s:compare}). 

\section{Conclusions}

\label{s:conc}

We have obtained new and more general finite convergence results
for a class of algorithms based on cutters.
A key tool was Raik's result on
\fejer\ monotone sequences (Fact~\ref{f:Raik}).

\begin{acknowledgements}
The authors thank two anonymous referees for careful reading, 
constructive comments, and for bringing additional references to
our attention. 
The authors also thank 
Jeffrey Pang for helpful discussions and for pointing
out additional references. 
\end{acknowledgements}



\end{document}